\newtheorem{theorem}{Theorem}[section]
\newtheorem{corollary}[theorem]{Corollary}
\newtheorem{lemma}[theorem]{Lemma}
\numberwithin{equation}{section}
\def\Q{\nabla}
\def\D{\Delta}
\def\la{\lambda}
\def\O{\Omega}
\def\Om{\Omega}
\def\xx{{\bf x}}
\def\zz{{\bf z}}
\newcommand{\Ess}{\textup{ess sup}}
\newcommand{\ctilde}{\widetilde{C}_n}
\newcommand{\rz}{\mathbb{R}}
\begin{document}
\title[New Weyl-Type Bounds]
{Two New Weyl-Type Bounds for the Dirichlet Laplacian}
\author[Hermi]{Lotfi Hermi}
\address{Department of Mathematics,
University of Arizona, 617 Santa Rita, Tucson, AZ 85721 USA }
\email{hermi@math.arizona.edu}
\urladdr{http://www.math.arizona.edu/$\sim$hermi}

\commby{}

\subjclass{Primary 35P15, Secondary 47A75, 49R50, 58J50}
\date{April 15, 2004 and, in revised form, May 31, 2006.}

\keywords{Eigenvalues of
the Laplacian, Weyl asymptotics,  Dirichlet problem, Neumann
problem, Li-Yau bounds, Kr\"oger bounds.}

\begin{abstract}

In this paper, we prove two new Weyl-type upper estimates for the
eigenvalues of the Dirichlet Laplacian. As a consequence, we
obtain the following {\em lower} bounds for its counting function. For $\la\ge
\la_1$, one has
\begin{equation}
N(\la) > \dfrac{2}{n+2} \ \dfrac{1}{H_n} \
\left(\la-\la_1\right)^{n/2} \ \la_1^{-n/2}, \notag
\end{equation}
and
\begin{equation}
N(\la) > \left(\dfrac{n+2}{n+4}\right)^{n/2} \ \dfrac{1}{H_n} \
\left(\la-(1+4/n) \ \la_1\right)^{n/2} \ \la_1^{-n/2}, \notag
\end{equation}
where
\begin{equation}
H_n=\dfrac{2 \ n}{j_{n/2-1,1}^2 J_{n/2}^2(j_{n/2-1,1})} \notag
\end{equation}
is a constant which depends on $n$, the dimension of the
underlying space, and Bessel functions and their zeros.
\end{abstract}

\maketitle

\tableofcontents

\newpage

\section{Four New Estimates}
\label{introduction}

Let $\Om \subset \rz^n$ be a bounded domain with piecewise smooth
boundary. We are interested in bounds for the eigenvalues of the
fixed and free membrane whose shape is assumed by $\Om$. The first
problem (also called the Dirichlet problem) is described by the
equation,
\begin{eqnarray}
 - \D u &=& \la \ u  \, \text{ in } \O,  \\
u &=& 0 \, \quad \text{   on } \partial\O. \notag
\end{eqnarray}
Its eigenvalues, known to form a discrete countable family with no
finite accumulation point (see \cite{Ben1} \cite{CH} for example),
are denoted (counting multiplicity) by $0 <\la_1 <\la_2 \le \la_3
\le \ldots \le \la_k \to \infty.$ Its associated eigenfunctions,
which form an orthonormal basis of {\em real} functions in
$L^2(\Om)$, are denoted by $u_1, u_2, u_3, \ldots$. The second
problem (also called the Neumann problem \cite{Ben2}) is described
by
\begin{eqnarray}
 - \D v &=& \mu \ v  \, \text{ in } \O,  \\
\dfrac{\partial{v}}{\partial{n}} &=& 0 \, \quad \text{   on }
\partial\O. \notag
\end{eqnarray}
Its eigenvalues, also discrete and countable, are denoted by $0 =\mu_1
<\mu_2 \le \mu_3 \le \ldots \le \mu_k \to \infty.$ In this paper,
we show the following.
\begin{theorem} \label{theo1}
For $k\ge1$, we have
\begin{equation}
\la_{k+1}-\la_1 \le \left(1+\dfrac{n}{2}\right)^{2/n} H_n^{2/n} \,
\la_1  \, k^{2/n} \label{new1}
\end{equation}
and the sharper, average-type, inequality
\begin{equation}
\sum_{j=1}^{k} \left(\la_{j}-\la_1\right) \le \dfrac{n}{n+2}
H_n^{2/n} \, \la_1  \, k^{1+2/n} \label{new2}
\end{equation}
where
\begin{equation} \label{constant}
H_n=\dfrac{2 \ n}{j_{n/2-1,1}^2 J_{n/2}^2(j_{n/2-1,1})}.\end{equation}
\end{theorem}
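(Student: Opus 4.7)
The plan is to reduce the Dirichlet problem on $\Om$ to a weighted Laplacian on $L^2(\Om,u_1^2\,dx)$, apply a Kr\"oger--Berezin plane-wave argument to the weighted operator, and then invoke a Chiti-type $L^2$--$L^\infty$ comparison to extract the constant $H_n$. The starting point is the algebraic identity
\begin{equation}
\int_\Om|\nabla(u_1\phi)|^2\,dx=\la_1\int_\Om u_1^2|\phi|^2\,dx+\int_\Om u_1^2|\nabla\phi|^2\,dx, \notag
\end{equation}
valid for any smooth bounded $\phi:\Om\to\cz$; it follows from $-\D u_1=\la_1 u_1$ and integration by parts, using that $u_1\phi\in H^1_0(\Om)$. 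Let $L$ be the self-adjoint operator on $L^2(\Om,u_1^2\,dx)$ associated with the form $\int u_1^2|\nabla\phi|^2\,dx$ under natural boundary conditions, with eigenvalues $0=\mu_1\le\mu_2\le\cdots$ and orthonormal eigenfunctions $\{\phi_j\}$. The family $\{u_1\phi_j\}$ is then orthonormal in $L^2(\Om)$ and diagonalizes the Dirichlet form with entries $\la_1+\mu_j$, so Rayleigh--Ritz yields $\la_j\le\la_1+\mu_j$ for every $j$. This reduces \eqref{new1} and \eqref{new2} to upper bounds, respectively, on $\mu_{k+1}$ and on $\sum_{j=1}^k\mu_j$.

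For these bounds I would use the plane-wave family $f_\xi(x)=e^{2\pi i\xi\cdot x}$, $\xi\in\rz^n$, which satisfies $\|f_\xi\|^2_w=1$ and has weighted Rayleigh quotient $4\pi^2|\xi|^2$. Plancherel combined with the pointwise bound $u_1^2\le\|u_1\|_\infty^2$ yields the Bessel estimate $\int_{\rz^n}|\langle\phi_j,f_\xi\rangle_w|^2\,d\xi\le\|u_1\|_\infty^2$, so the positive operator $T$ defined by $Tg=\int_{B(0,R)}\langle f_\xi,g\rangle_w f_\xi\,d\xi$ satisfies $\|T\|_{\mathrm{op}}\le\|u_1\|_\infty^2$ and $\mathrm{tr}(T)=\omega_n R^n$. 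For \eqref{new2} I would set $\widetilde T=T/\|u_1\|_\infty^2$, choose $R$ so that $\mathrm{tr}(\widetilde T)=k$, and apply the trace inequality $\sum_{j=1}^k\mu_j\le\mathrm{tr}(L\widetilde T)$ (valid for any $0\le\widetilde T\le I$ of trace $k$); since $\mathrm{tr}(LT)=\frac{4\pi^2 n}{n+2}\omega_n R^{n+2}$, this produces $\sum_{j=1}^k\mu_j\le\frac{4\pi^2 n}{n+2}\omega_n^{-2/n}\|u_1\|_\infty^{4/n}k^{1+2/n}$. For \eqref{new1}, projecting $f_\xi$ onto the orthogonal complement of $\phi_1,\ldots,\phi_k$ gives the pointwise inequality $\mu_{k+1}(1-\sum_{j\le k}|c_j(\xi)|^2)\le 4\pi^2|\xi|^2-\sum_{j\le k}\mu_j|c_j(\xi)|^2$ with $c_j(\xi)=\langle\phi_j,f_\xi\rangle_w$; integrating over $\xi\in B(0,R)$, discarding the nonnegative $\mu_j$-term, using $\sum_{j\le k}\int_B|c_j|^2\,d\xi\le k\|u_1\|_\infty^2$, and optimizing in $R$ gives $\mu_{k+1}\le(1+n/2)^{2/n}\cdot 4\pi^2\omega_n^{-2/n}\|u_1\|_\infty^{4/n}k^{2/n}$.

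The last step is to insert the Chiti-type $L^\infty$ comparison
\begin{equation}
\|u_1\|_\infty^2\le\frac{H_n\,\omega_n\,\la_1^{n/2}}{(2\pi)^n}, \notag
\end{equation}
which says that among domains with prescribed $\la_1$ and $L^2$-normalized first eigenfunction the ball maximizes $\|u_1\|_\infty$. On the ball of radius $R^\#=j_{n/2-1,1}/\sqrt{\la_1}$, the explicit formula $u_1^{\mathrm{ball}}(r)=Cr^{-(n/2-1)}J_{n/2-1}(j_{n/2-1,1}r/R^\#)$, fixed in $L^2$ by $\int_0^1 sJ_{n/2-1}^2(j_{n/2-1,1}s)\,ds=\tfrac12 J_{n/2}^2(j_{n/2-1,1})$, and evaluated at the origin via $J_\nu(z)\sim z^\nu/(2^\nu\Gamma(\nu+1))$, identifies the ball value of $\|u_1\|_\infty^2(2\pi)^n/(\omega_n\la_1^{n/2})$ as precisely the constant $H_n$ in \eqref{constant}. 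Substituting $\|u_1\|_\infty^{4/n}\le H_n^{2/n}\omega_n^{2/n}\la_1/(4\pi^2)$ into the two bounds of the previous paragraph cancels all the $\omega_n$ and $4\pi^2$ factors and produces \eqref{new1} and \eqref{new2}.

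The main obstacle is arranging the numerical factors --- from the Kr\"oger averaging, the optimization in $R$, and the Bessel computation on the ball --- to combine precisely into $H_n=2n/[j_{n/2-1,1}^2 J_{n/2}^2(j_{n/2-1,1})]$ without any stray powers of $\omega_n$ or of $2\pi$. The delicate link is the Chiti step, which requires the $L^\infty$--form of the comparison (not merely a bound on rearranged function values) together with exact matching of the ball value of $\|u_1\|_\infty^2\la_1^{-n/2}$ to $H_n$ via the small-argument Bessel asymptotics.
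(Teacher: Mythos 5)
Your proposal is correct, and it arrives at the same constants, but it is organized along a genuinely different axis than the paper's proof. The paper works directly with the Dirichlet form: it plugs the test function $\phi=e^{i\xx\cdot\zz}u_1-\sum_j a_j(\zz)u_j$ into the Rayleigh--Ritz quotient, carries out the algebra of $\int_\Om|\Q\phi|^2$ explicitly (reproducing, in effect, the ground-state identity term by term), integrates over $\zz\in B_r$, controls $\int_{B_r}|a_j|^2\,d\zz$ by Parseval plus Chiti, and then proves the key Lemma~\ref{main-lemma} by \emph{induction}, extracting \eqref{new1} and \eqref{new2} by two different choices of $r$. You instead perform the ground-state transform $u\mapsto u/u_1$ up front, reducing the Dirichlet problem to a weighted Neumann-type operator $L$ on $L^2(\Om,u_1^2\,dx)$ with $\la_j\le\la_1+\mu_j$, and then run a Kr\"oger--Berezin argument on $L$: for \eqref{new2} you apply the trace inequality $\sum_{j\le k}\mu_j\le\operatorname{tr}(L\widetilde T)$ to a coherent-state average, and for \eqref{new1} you integrate the pointwise Rayleigh bound and optimize in $R$. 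This is essentially the Laptev/Legendre-transform route that the paper itself flags in Section~\ref{comparison} as an alternative derivation of \eqref{new2}, and it nicely makes explicit the Dirichlet--Neumann analogy the paper only gestures at: your $\mu_j$'s are exactly what plays the role of Kr\"oger's Neumann eigenvalues, and your Plancherel bound $\int_{\rz^n}|\langle\phi_j,f_\xi\rangle_w|^2\,d\xi\le\|u_1\|_\infty^2$ is the same estimate the paper derives for $\int_{\rz^n}|\tilde a_j|^2$. What the paper's inductive Lemma~\ref{main-lemma} buys is a slightly more elementary presentation that never names the weighted operator; what your version buys is conceptual clarity and the Berezin inequality replacing the induction. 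One small point worth tightening in your write-up: the inequality $\la_j\le\la_1+\mu_j$ via Rayleigh--Ritz needs $u_1\phi_j\in H_0^1(\Om)$, i.e.\ some boundary regularity ensuring $\phi_j=u_j/u_1$ stays controlled near $\partial\Om$; this is unproblematic for smooth domains by the Hopf lemma but deserves a remark for the piecewise-smooth domains the paper allows (the paper sidesteps this by never dividing by $u_1$). Your ball computation identifying $\|u_1\|_\infty^2/(\omega_n L_n^{cl}\la_1^{n/2})$ with $H_n$ via the small-argument Bessel asymptotic is correct and all the $\omega_n$ and $4\pi^2$ factors do cancel as you hoped.
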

As corollaries to these two inequalities, we prove (\ref{new3})
and (\ref{new4}).
Here $J_n(x)$ and $j_{n,p}$ denote, respectively, the Bessel
function of order $n$, and the $p$th zero of this function (see
\cite{AS}). The proof of this theorem is offered in Section
\ref{proof-of-main}. That (\ref{new2}) is sharper than
(\ref{new1}) follows from left Riemann sum considerations (See
Fig. 1), namely
\begin{equation} \label{left-riemann}
\sum_{j=0}^{k-1} \ j^{2/n} < \dfrac{k^{1+2/n}}{1+2/n}.
\end{equation}

\begin{figure}[htb!]
  \begin{center}
    \includegraphics{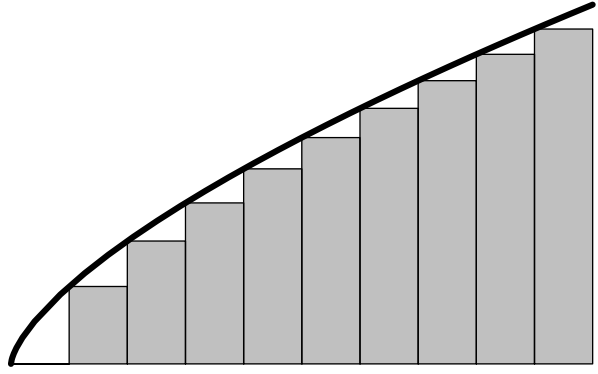}
    \caption{Left Riemann  Sums for $y=x^{2/n}$, for $0\le x\le k$.}
    \label{fig1}
  \end{center}
\end{figure}

\vskip 1 cm

To see this, we apply (\ref{new1}) to $j=0,1, \cdots, k-1$, then
sum. We obtain
\begin{eqnarray*}
\sum_{j=1}^{k} (\la_j- \la_1) &\le&
\left(1+\dfrac{n}{2}\right)^{2/n} H_n^{2/n} \ \la_1 \
\left(\sum_{j=0}^{k-1} j^{2/n}\right) \\ \notag
&<&\left(1+\dfrac{n}{2}\right)^{2/n} H_n^{2/n} \ \la_1 \
\dfrac{k^{1+2/n}}{1+2/n}.
\end{eqnarray*}
Inequality (\ref{new2}) is tighter since
\[\left(1+\dfrac{n}{2}\right)^{2/n} >1.\]

In fact, we have the following corollary.
\begin{corollary} \label{Chiti-Ashbaugh-Benguria}
For $k\ge 1$, the eigenvalues of the Dirichlet problem satisfy the
estimate
\begin{equation}
\sum_{j=2}^{k+1} \dfrac{1}{\la_{j}-\la_1} \ge \dfrac{n+2}{n} \
H_n^{-2/n} \ \dfrac{1}{\la_1} \ \dfrac{k^2}{(k+1)^{1+2/n}}.
\label{new3}
\end{equation}
\end{corollary}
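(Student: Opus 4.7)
The plan is to start from the averaged inequality (\ref{new2}), shift the index from $k$ to $k+1$, and then flip the resulting upper bound into a lower bound on the reciprocal sum by means of the Cauchy--Schwarz (or equivalently AM--HM) inequality.

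First I would replace $k$ by $k+1$ in (\ref{new2}) to obtain
\begin{equation*}
\sum_{j=1}^{k+1}\bigl(\la_j-\la_1\bigr) \;\le\; \dfrac{n}{n+2}\,H_n^{2/n}\,\la_1\,(k+1)^{1+2/n}.
\end{equation*}
Because the $j=1$ summand vanishes, the left-hand side is actually $\sum_{j=2}^{k+1}(\la_j-\la_1)$, which is a sum of $k$ strictly positive terms.

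Next I would apply the Cauchy--Schwarz inequality to the two $k$-tuples $\bigl(\sqrt{\la_j-\la_1}\bigr)_{j=2}^{k+1}$ and $\bigl(1/\sqrt{\la_j-\la_1}\bigr)_{j=2}^{k+1}$, which yields
\begin{equation*}
k^{2} \;=\; \Bigl(\sum_{j=2}^{k+1} 1\Bigr)^{2} \;\le\; \Bigl(\sum_{j=2}^{k+1}(\la_j-\la_1)\Bigr)\Bigl(\sum_{j=2}^{k+1}\dfrac{1}{\la_j-\la_1}\Bigr).
\end{equation*}
Dividing by the first factor on the right and inserting the upper bound from the previous step gives
\begin{equation*}
\sum_{j=2}^{k+1}\dfrac{1}{\la_j-\la_1} \;\ge\; \dfrac{k^{2}}{\sum_{j=2}^{k+1}(\la_j-\la_1)} \;\ge\; \dfrac{n+2}{n}\,H_n^{-2/n}\,\dfrac{1}{\la_1}\,\dfrac{k^{2}}{(k+1)^{1+2/n}},
\end{equation*}
which is precisely (\ref{new3}).

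There is essentially no technical obstacle here; the only point requiring a moment of care is the bookkeeping of the index shift (the need to apply (\ref{new2}) at level $k+1$ so that exactly $k$ nonzero terms remain after discarding $j=1$, matching the numerator $k^{2}$ produced by Cauchy--Schwarz). The fact that the AM--HM step is tight for equal summands explains why one cannot hope to improve the constant $(n+2)/n$ by this route.
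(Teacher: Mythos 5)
Your argument is exactly the paper's: Cauchy--Schwarz applied to $\bigl(\sqrt{\la_j-\la_1}\bigr)$ and $\bigl(1/\sqrt{\la_j-\la_1}\bigr)$ over $j=2,\dots,k+1$, followed by (\ref{new2}) at level $k+1$ (the index shift you carefully flag is implicit but present in the paper, since its stated bound carries $(k+1)^{1+2/n}$). Correct and essentially identical in approach.
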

This of course follows from the Cauchy-Schwarz inequality
\begin{align*}
k^2 &= \left( \sum_{j=2}^{k+1} \dfrac{1}{\sqrt{\la_j-\la_1}} \
\sqrt{\la_j-\la_1} \right)^2 \\ \notag &\le \sum_{j=2}^{k+1}
\dfrac{1}{\la_j-\la_1} \ \sum_{j=2}^{k+1} (\la_j-\la_1)
\end{align*}
and inequality (\ref{new2}). Another consequence of (\ref{new2})
and H.~C.~Yang's \cite{Y} (see also \cite{A1} \cite{A2},
\cite{AH2}, \cite{AH3}, \cite{AH4}) inequality
\[\la_{k+1} \le \left(1+\frac{4}{n}\right) \, \dfrac{1}{k} \ \sum_{j=1}^{k} \la_j, \]
is the bound
\begin{equation}
\la_{k+1} \le \left(1+\frac{4}{n}\right) \, \left(1+
\dfrac{n}{n+2} H_n^{2/n} \ k^{2/n}\right) \ \la_1. \label{new4}
\end{equation}

These types of inequalities follow the spirit of Weyl's asymptotic law, which states that
\begin{equation}
\la_k \sim \dfrac{4 \pi^2 k^{2/n}}{\left(C_n |\Om|\right)^{2/n}}
\text{ as } k \rightarrow \infty, \notag
\end{equation}
and
\begin{equation}
\mu_{k+1} \sim \dfrac{4 \pi^2 k^{2/n}}{\left(C_n
|\Om|\right)^{2/n}} \text{ as } k \rightarrow \infty, \notag
\end{equation}
where $C_n=\dfrac{\pi^{n/2}}{\Gamma(n/2+1)}=$ volume of the unit
$n-$ball, and $|\Omega|$ denotes the volume of $\Omega$.
These formulas were proved by Weyl \cite{Weyl} in 1910.

There is a beautiful expos\'e of the history of this problem in
Kac's paper \cite{Kac} (see also the equally entertaining paper
\cite{Protter}). Baltes and Hilf \cite{BH} trace the history of
this type of asymptotics to Pockels (1891) (who proved the
discreteness of the specturm of the Dirichlet Laplacian
\cite{Ben1}), Lord Rayleigh (1905), Sommerfeld (1910), and Lorentz
(1910). Many asymptotics of this type were developed by Courant
and Hilbert \cite{CH}, Pleijel, and Minakshisundaram (see
\cite{BH} for further insight and references).

In 1954, P\'olya conjectured \cite{Patterns} that, for $k=1, 2,
\ldots,$ (see also the series of papers \cite{PolyaConj} \cite{Polya2}
\cite{Polya3})
\begin{equation} \label{pc}
\mu_{k+1} \le \dfrac{4 \pi^2 k^{2/n}}{\left(C_n
|\Om|\right)^{2/n}} \le \la_k.
\end{equation}
He proved his conjecture for the case of tiling domains in a paper
dedicated to Weyl in 1961 \cite{PolyaConj}. The restrictive
conditions for the Neumann case in \cite{PolyaConj} (see also
\cite{Polya3}) were relaxed and the proof was refined and
simplified by Kellner \cite{Kellner}, ``to P\'olya's pleasure and
satisfaction,'' reports Hersch (see p.~523 of \cite{Polya}). In
1984, Urakawa \cite{Ur} refined the Dirichlet bound to
\begin{equation}
\la_k \ge \delta_L(\Om)^{2/n} \ \dfrac{4 \pi^2 k^{2/n}} {\left(C_n
|\Om|\right)^{2/n}}
\end{equation}
where $\delta_L(\Om)$ is the {\em lattice packing density} of
$\Om$ ($\delta_L(\Om)=1$ for a tiling domain).

Also in 1954, Payne conjectured \cite{Payne} that, independently
of the Weyl term in (\ref{pc}),
\begin{equation}
\mu_{k+1} \le \la_k.
\end{equation}
This question was settled by Friedlander \cite{Friedlander} in
1991 for domains with smooth boundaries. More recently, Friedlander's
breakthrough was generalized for domains with non-smooth boundaries by
Filonov \cite{Filo}. On a different track, Li and Yau \cite{LY} proved
that, for $k\ge1$,
\begin{equation}
\sum_{i=1}^k \la_i \ge \dfrac{n}{n+2} \dfrac{4 \pi^2 k^{1+2/n}}
{\left(C_n |\Om|\right)^{2/n}},
\end{equation}
from which it obtains that
\begin{equation}
\la_k \ge \dfrac{n}{n+2} \dfrac{4 \pi^2 k^{2/n}} {\left(C_n
|\Om|\right)^{2/n}}.
\end{equation}

Inequalities (\ref{new1}), (\ref{new2}) and (\ref{new4}) can then
be thought of as counterparts to these two inequalities of Li-Yau.

In 1992, Kr\"oger \cite{Kr} produced the Neumann parallels to the
Li-Yau inequalities. For $k\ge 1$, he proved
\begin{equation}
\sum_{i=1}^k \mu_i \le \dfrac{n}{n+2} \dfrac{4 \pi^2 k^{1+2/n}}
{\left(C_n |\Om|\right)^{2/n}}, \label{krog1}
\end{equation}
and
\begin{equation}
\mu_{k+1} \le \left(1+\dfrac{n}{2}\right)^{2/n} \dfrac{4 \pi^2
k^{2/n}} {\left(C_n |\Om|\right)^{2/n}}. \label{krog2}
\end{equation}
Notice that (\ref{krog2}) implies that
\begin{eqnarray*}
\sum_{j=1}^k \mu_{j} &\le \left(1+\dfrac{n}{2}\right)^{2/n}
\dfrac{4 \pi^2}{\left(C_n |\Om|\right)^{2/n}} \
\left(\sum_{j=0}^{k-1} \
j^{2/n} \right) \\
\notag &< \left(1+\dfrac{n}{2}\right)^{2/n} \ \dfrac{4
\pi^2}{\left(C_n |\Om|\right)^{2/n}} \ \dfrac{k^{1+2/n}}{1+2/n}.
\end{eqnarray*}
Again, we have used the left Riemann sum comparison
(\ref{left-riemann}) in these two inequalities. Kr\"oger's
inequality (\ref{krog1}) is tighter, since
\[\dfrac{\left(1+n/2\right)^{2/n}}{1+2/n} > \dfrac{1}{1+2/n}= \dfrac{n}{n+2}.\]
Thus, the ``averaged'' version (namely (\ref{krog1})) of
Kr\"oger's two inequalities is sharper.

Our bounds (\ref{new1}) and (\ref{new4}) are also related to the
result of Ashbaugh and Benguria \cite{AB4} who proved, for $m\ge
1$,
\begin{equation}
\dfrac{\la_{2^m}}{\la_1} \le
\left(\dfrac{j_{n/2,1}^2}{j_{n/2-1,1}^2}\right)^m. \label{ab94}
\end{equation}
Of course, one cannot expect to fare better in the case of $m=1$
since this is another conjecture by Payne, P\'olya, and Weinberger
\cite{PPW1} \cite{PPW2} (herein referred to as PPW) which was settled
by Ashbaugh and Benguria
\cite{ABPPW} (see also \cite{AB0}) in 1991, namely
\begin{equation}
\dfrac{\la_2}{\la_1} \le \dfrac{j_{n/2,1}^2}{j_{n/2-1,1}^2}.
\label{AB-Ineq}
\end{equation}
The ratio on the RHS of (\ref{AB-Ineq}) is that for the two first
eigenvalues of an $n$-ball. It has the asymptotic expansion
\cite{AB5}
\begin{equation} \label{ppw-expansion}
\dfrac{j_{n/2,1}^2}{j_{n/2-1,1}^2}
\sim 1+ \dfrac{4}{n} \ - \dfrac{4}{3} \ c_1
\dfrac{2^{5/3}}{n^{5/3}}+ \dfrac{12}{n^2}+ \dfrac{4}{3} (c_1^2-2
c_2) \dfrac{2^{7/3}}{n^{7/3}}+ O\left(n^{-8/3}\right),
\end{equation}
where $c_1 \approx 1.8557571$ and $c_2 \approx 1.033150$ (see
\cite{AS}).

Payne, P\'olya, and Weinberger \cite{PPW1} \cite{PPW2} (see also
\cite{AB4} \cite{Thom}) proved the weaker form
\begin{equation}
\la_{k+1}- \la_k \le \dfrac{4}{n k} \ \sum_{j=1}^k \ \la_j,
\end{equation}
from which one can infer that
\[\dfrac{\la_{k+1}}{\la_k} \le 1+ 4/n,\]
and
\begin{equation} \label{ppw}
\dfrac{\la_{k}}{\la_1} \le (1+ 4/n)^{k-1}.
\end{equation}
Note that (\ref{ab94}) can be put in the form
\begin{equation} \label{ab94-comp1}
\dfrac{\la_{k}}{\la_1} \le
\left(\dfrac{j_{n/2,1}^2}{j_{n/2-1,1}^2}
\right)^{\left[\dfrac{\log k}{\log 2}\right]}
\end{equation}
where $\left[ x \right]$ stands for the integer part of $x$. This
bound can be thought of as one of the form
\begin{equation} \label{ab94-comp2}
\left(\dfrac{j_{n/2,1}^2}{j_{n/2-1,1}^2} \right)^{\dfrac{\log
k}{\log 2}}=k^{\dfrac{\log{ \left(j_{n/2,1}^2
/j_{n/2-1,1}^2\right)}}{\log 2}}.
\end{equation}
By virtue of the expansion (\ref{ppw-expansion}), the power
$\dfrac{\log{ \left(j_{n/2,1}^2 /j_{n/2-1,1}^2\right)}}{\log 2}$
has the asymptotic form
\[\dfrac{1}{\log 2}
\left(\dfrac{4}{n} \ - \dfrac{4}{3} \ c_1 \dfrac{2^{5/3}}{n^{5/3}}
\notag + \dfrac{4}{n^2}+ \dfrac{4}{3} (c_1^2-2 c_2)
\dfrac{2^{7/3}}{n^{7/3}}\right)+ O\left(n^{-8/3}\right),\]
 or
\begin{equation} \label{ab-power}
\dfrac{\log{ \left(j_{n/2,1}^2 /j_{n/2-1,1}^2\right)}}{\log 2}
\sim \dfrac{5.77078}{n} \ - 6.10703 \ c_1 \dfrac{1}{n^{5/3}}+
 O\left(\dfrac{1}{n^2}\right).\end{equation}

Thus, while tight at the bottom of the spectrum (viz.
(\ref{AB-Ineq})), (\ref{ab94}) does not capture the expected Weyl
behavior of $k^{2/n}$. Inequalities (\ref{new1}) and (\ref{new4})
remedy this.

The key to the new results is an observation by Ashbaugh and
Benguria--the extent and limitations of which are discussed on
p.~561 of \cite{AB8}. If one identifies $\mu_{k+1}$ with
$\la_{k+1} - \la_1$ and $|\Om|^{-2/n}$ with $\la_1$, then the RHS
of the PPW inequality (\ref{AB-Ineq}) can be seen as maximizing
the ratio $\la_2/\la_1$ in the same vein as the quantity
$C_n^{2/n} p_{n/2,1}^2$ maximizes $|\Om|^{-2/n} \ \mu_2$ for any
domain $\Om$ ($p_{\nu,k}$ denotes the $k$th positive zero of the
derivative of $x^{1-\nu} J_{\nu}(x)$ and $C_n$ is as defined
above, i.e. the volume of the unit $n$-ball). The latter is a
result of Szeg\H{o} in $2$ dimensions and Weinberger in $n$
dimensions. The maximum for both is assumed when $\Om$ is an
$n$-ball. The strategy of proof for both is similar though the
first is considerably more involved \cite{ABPPW} \cite{AB0}. This
loose analogy can also be seen in comparing the methods of proof
and results for
\begin{equation}
\sum_{j=1}^n (\la_{j+1} - \la_1) \le 4 \, \la_1
\label{sum}
\end{equation}
and
\begin{equation}
\sum_{j=1}^n \mu_{j+1} \le n \
\left(\dfrac{C_n}{|\Om|}\right)^{2/n} p_{n/2,1}^2 \label{abmu2}
\end{equation}
both of which were proved by Ashbaugh and Benguria in \cite{AB5}
and \cite{AB8}. Inequality (\ref{sum}) is the extension to $n$
dimensions of a result in \cite{PPW2}. (Note that (\ref{abmu2})
was proved with the further restriction that $\Om$ is invariant
with respect to 90$^o$ rotations in the coordinate planes.) Our
new inequality (\ref{new2}) can be viewed as an extension for
$k\neq n$ of (\ref{sum}). (See Section \ref{comparison} for a
comparison with existing results.)

The loose correspondence can also be traced in the analogy between
\begin{equation}
\sum_{k=1}^n \dfrac{1}{\la_{k+1} - \la_1} \ge \dfrac{2
j_{n/2-1,1}^2+ n (n-4)}{6 \la_1} > \dfrac{n^2}{4 \la_1}
\label{abchiti}
\end{equation}
and
\begin{equation}
\sum_{k=1}^n \dfrac{1}{\mu_{k+1}} \ge \dfrac{ n \
\left(\dfrac{|\Om|}{C_n}\right)^{2/n}}{ p_{n/2,1}^2}.
\label{abmu1}
\end{equation}
Both of these bounds are also results found in \cite{AB5} and
\cite{AB8} (with (\ref{abmu1}) also true under rotational symmetry
of the base domain $\Om$). Inequality (\ref{abchiti}) is an
extension and improvement of earlier results of Chiti \cite{Chi4}.
The $n^2/4 \la_1$ term in (\ref{abchiti}) is what corresponds to
(\ref{sum}) via the ``usual Cauchy-Schwarz connection'' (viz. the
proof of Cor.~\ref{Chiti-Ashbaugh-Benguria}). On the other hand,
there is also a conjectured inequality, from which, if proved,
(\ref{abchiti}) would follow via the Cauchy-Schwarz argument. That
inequality would be (\ref{sum}) but with its RHS replaced by
\[\dfrac{6 n^2 \la_1}{2 j_{n/2-1,1}^2+ n (n-4)}.\]

Our new inequality (\ref{new3}) can be viewed as an extension, for
$k\neq n$, of the Ashbaugh-Benguria-Chiti inequality
(\ref{abchiti}).

We complete this section by giving the asymptotic expansions for
the coefficients appearing in (\ref{new1}), and (\ref{new4}) (see
\cite{AB5} and \cite{Lau} for similar estimates).
\begin{equation} \label{asymp-1}
\left(1+\dfrac{n}{2}\right)^{2/n}~H_n^{2/n}  \sim  1  +
\dfrac{2}{3 n} \log{\dfrac{4 n^4}{b_0^6}} +  2^{8/3} (b_1-c_1)
\left(\dfrac{1}{n}\right)^{5/3}  + O\left(\dfrac{1}{n}\right)^2.
\end{equation}
In the case of (\ref{new4}), the expansion reads
\begin{eqnarray}
\left(1+\dfrac{4}{n}\right) \dfrac{n}{n+2} & H_n^{2/n} \sim 1+
\dfrac{2}{3 n} \ \left(3+ \log{\dfrac{32 n}{b_0^6}}\right) + \\
\notag & 2^{8/3} \ (b_1-c_1) \left(\dfrac{1}{n}\right)^{5/3}  +
O\left(\dfrac{1}{n}\right)^2.
\end{eqnarray}
Here (see \cite{AS}) $b_0\approx 1.1131028$, $b_1 \approx
1.484606$ and $c_1 \approx 1.8557571$.

\section{The Counting Function} \label{counting}

One can motivate these inequalities in terms of the {\em counting
function},
\[N(\la) = \sum_{\la_k \le \la} \ 1 = \sup_{\la_k \le \la} \ k. \]
Our Theorem \ref{theo1} can then be restated.
\begin{theorem} \label{counting2}
For $\la\ge \la_1$, we have the lower bounds
\begin{equation} \label{count1}
N(\la) > \dfrac{2}{n+2} \ \dfrac{1}{H_n} \
\left(\la-\la_1\right)^{n/2} \ \la_1^{-n/2},
\end{equation}
and
\begin{equation} \label{count2}
N(\la) > \left(\dfrac{n+2}{n+4}\right)^{n/2} \ \dfrac{1}{H_n} \
\left(\la-(1+4/n) \ \la_1\right)^{n/2} \ \la_1^{-n/2}.
\end{equation}
\end{theorem}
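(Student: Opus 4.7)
The plan is to read Theorem~\ref{counting2} as the functional inverse of Theorem~\ref{theo1}: each claimed lower bound on $N(\la)$ is obtained by solving one of the upper bounds on $\la_{k+1}$ from Theorem~\ref{theo1} for $k$. Fix $\la\ge\la_1$ and set $k=N(\la)$, so that by the definition of the counting function one has $\la_k\le\la<\la_{k+1}$, the strict inequality on the right being the key input.

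For the first bound (\ref{count1}) I would invoke (\ref{new1}), namely $\la_{k+1}-\la_1\le(1+n/2)^{2/n}H_n^{2/n}\la_1 k^{2/n}$. Combining with $\la<\la_{k+1}$ and raising to the power $n/2$ yields $k>(1+n/2)^{-1}H_n^{-1}(\la-\la_1)^{n/2}\la_1^{-n/2}$, which is exactly (\ref{count1}) once $(1+n/2)^{-1}$ is rewritten as $2/(n+2)$. For the second bound (\ref{count2}) I would use (\ref{new4}) in the same spirit: from $\la<\la_{k+1}\le(1+4/n)(1+\tfrac{n}{n+2}H_n^{2/n}k^{2/n})\la_1$ together with the algebraic identity $(1+4/n)\cdot n/(n+2)=(n+4)/(n+2)$, one obtains $\la-(1+4/n)\la_1<\tfrac{n+4}{n+2}H_n^{2/n}\la_1 k^{2/n}$, and raising to the power $n/2$ gives (\ref{count2}). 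When $\la<(1+4/n)\la_1$ the right-hand side of (\ref{count2}) should be interpreted as zero (or the inequality is vacuous), consistently with the behavior of the first bound at $\la=\la_1$.

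There is really no obstacle here beyond the algebraic inversion: the content of Theorem~\ref{counting2} is already encoded in Theorem~\ref{theo1}, and the only bookkeeping concerns tracking that the inequality $\la<\la_{k+1}$ is strict (which accounts for the strict inequalities in (\ref{count1}) and (\ref{count2})) and rearranging constants. The mathematical work has been done in proving Theorem~\ref{theo1}; the present statement is a cosmetic but conceptually important reformulation emphasizing the Weyl-asymptotic flavor, i.e.\ displaying the expected $(\la/\la_1)^{n/2}$ growth of $N(\la)$ with explicit constants.
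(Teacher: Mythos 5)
Your proposal matches the paper's proof essentially verbatim: both set $k=N(\la)$, use the defining property $\la<\la_{k+1}$, substitute into the upper bounds (\ref{new1}) and (\ref{new4}), and invert algebraically. The paper only spells out the derivation of (\ref{count1}) and remarks that (\ref{count2}) follows analogously; your explicit algebra for (\ref{count2}), including the identity $(1+4/n)\cdot\frac{n}{n+2}=\frac{n+4}{n+2}$, is correct and fills in that sketch.
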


\noindent {\em Remark.} While (\ref{count1}) is a direct corollary
to an earlier result of Laptev (see Cor.~4.4 in \cite{Lap}) and
Chiti's inequality (\ref{Chiti-ess-sup}) below (see \cite{Chi3},
\cite{Chi4}, \cite{AH1}), (\ref{count2}) is new and in fact
sharper. We refer the reader to the discussion in Section
\ref{comparison}.

\begin{proof}
Inequalities (\ref{count1}) and (\ref{count2}) follow from
(\ref{new1}) and (\ref{new4}), respectively. The proof for both is
similar. We show  (\ref{count1}) for illustration. Let $N(\la)=j$.
Then, from the definition of $N(\la)$, $\la_{j+1}>\la$. Inequality
(\ref{new1}) then implies
\[\la - \la_1 < \left(1+\dfrac{n}{2}\right)^{2/n} H_n^{2/n} \,
\la_1  \, j^{2/n}.
\]
Reversing, one gets (\ref{count1}).
\end{proof}
\noindent {\em Remark.} Using an entirely different method,
Safarov (see ineq. (2.9) of \cite{Safarov}) proved,
\begin{equation}
N(\la) \ge \dfrac{2}{n+2} \ e^{-1/{4 \pi}} \ L_n^{cl}
\left(\la-\la_1\right)^{n/2} \ \la_1^{-n/2},
\label{saf}
\end{equation}
where $L_n^{cl}= C_n/(2 \pi)^n$. We have listed in Table
\ref{tabla} numerical values of the coefficients appearing in
(\ref{count2}), (\ref{count1}), and (\ref{saf}).

\begin{table} [htbp]
  \begin{center}
   \begin{tabular}{|c|c|c||c|}
    \hline \hline
$n$ &$\left(\frac{n+2}{n+4}\right)^{n/2} \frac{1}{H_n}$ & $\frac{2}{n+2} \frac{1}{H_n}$  & $\frac{2}{n+2} e^{-1/{4 \pi}} \ L_n^{cl}$ \\
    \hline
2 & 0.259775 & 0.194831   & 0.036745    \\
3 & 0.201227 & 0.133333   & 0.062381  \\
4 & 0.167459 & 0.099235   & 0.000975      \\
5 & 0.145412 & 0.077874   & 0.000142       \\
6 & 0.129833 & 0.063395   & 0.000019     \\
7 & 0.118201 & 0.053193   & $2.5 \times 10^{-6}$     \\
    \hline
\end{tabular}
\end{center}
\caption{ Comparison of the coefficients appearing in (\ref{count2}), (\ref{count1}), and (\ref{saf}).}
    \label{tabla}
\end{table}

\noindent These inequalities complete bounds of the form
\begin{equation}
N(\la) \le \tilde{K}_n \ \la^{n/2} \ |\Om|.
\end{equation}
found in the works of Lieb \cite{Lieb} and Li-Yau \cite{LY} (see
Laptev \cite{Lap}). Of course, Weyl's asymptotic formula reads
\begin{equation}
N(\la) \sim \dfrac{C_n |\Om| \la^{n/2}}{(2 \pi)^n} = L^{cl}_n \
|\Om| \la^{n/2}.
\end{equation}
While the P\'olya conjecture states
\begin{equation}
N(\la) \le L^{cl}_n \la^{n/2}  \ |\Om|.
\end{equation}
The Li-Yau bounds can be reformulated as (see \cite{Lap})
\begin{equation}
N(\la) \le \left(\dfrac{n+2}{n}\right)^{n/2} \ L^{cl}_n \
\la^{n/2} \ |\Om|. \label{LY}
\end{equation}
In the same spirit, one should note F. Berezin's inequality
\cite{Berezin} (see \cite{Safar})
\begin{equation}
\displaystyle{\int_0^{\la} N(\mu) \, d \mu} \le \frac{2}{n+2}
L^{cl}_n \ \la^{n/2+1} \ |\Om|. \label{Ber}
\end{equation}
Laptev \cite{Lap} and Safarov \cite{Safar} have noted that the
Li-Yau bound (\ref{LY}) is a corollary of (\ref{Ber}). Indeed, for
$\theta>0$,
\begin{equation}
N(\la) \le \frac{1}{\theta \la} \displaystyle{\int_0^{\la + \theta
\la} N(\mu) d \mu} \le \dfrac{2 \left(1+
\theta\right)^{n/2+1}}{\left(n+2\right) \theta} L^{cl}_n |\Om|
\la^{n/2}.
\end{equation}
Li-Yau's bound follows by setting $\theta=2/n$. As for the
Ashbaugh-Benguria inequality (\ref{ab94}), it can be reworked to
appear in the following terms (see \cite{AB4}): For $\la \ge
\la_1$,
\begin{equation} \label{ab-count}
N(\la) \ge 2^{\left[
\log(\la/\la_1)/\log(j_{n/2,1}^2/j_{n/2-1,1}^2) \right]}.
\end{equation}
Notice that the RHS of this inequality assumes the form
\[\left(\dfrac{\la}{\la_1}\right)^{\dfrac{1}
{\log_2{j_{n/2,1}^2/j_{n/2-1,1}^2}}}.\] This allows one to restate
(\ref{ab-count}) (in view of (\ref{ab-power})) as
\[N(\la) \ge \left(\dfrac{\la}{\la_1}\right)^{n/5.77078}.\]
In fact, for $\la\ge \la_2$, Ashbaugh and Benguria have the sharper
bound \cite{AB4}
\begin{equation} \label{ab-count-2}
N(\la) \ge 2^{1+ \left[
\log(\la/\la_2)/\log(j_{n/2,1}^2/j_{n/2-1,1}^2) \right]}.
\end{equation}
which, in view of the above considerations, reads as
\[N(\la) \ge 2 \ \left(\dfrac{\la}{\la_2}\right)^{n/5.77078}.\]

\section{Proof of Theorem \ref{theo1}} \label{proof-of-main}

We begin with the Rayleigh-Ritz estimate for $\la_{k+1}$,
\begin{equation} \label{RR}
\la_{k+1} \le \inf_{r\ge r_0} \dfrac{\int_{B_r} \int_{\Om} \ | \Q
\phi |^2 d\xx \ d\zz}{\int_{B_r} \int_{\Om} |\phi|^2 d\xx \ d\zz},
\end{equation}
where $B_r=$ is a ball of radius $r\ge r_0$, and $r_0 = H_n^{1/n}
\ (1+k)^{1/n} \ \sqrt{\la_1}$. This characterization is suggested
by considerations similar to \cite{Kr}. In fact, the bulk of the
arguments follow steps described there. The {\em test function}
$\phi$ is required to satisfy
\[\phi \perp u_1, u_2, \cdots, u_k.\]
It is chosen to be of the form
\[\phi= e^{i \xx \cdot \zz }
u_1(\xx) - \sum_{j=1}^k a_j(\zz) \ u_j(\xx).\]
The orthogonality conditions lead to $a_j(\zz)= \int_{\Om} \ u_1
\overline{u_j} e^{i \xx \cdot \zz } d\xx$. We calculate
\begin{eqnarray} \label{bottom}
\int_{\Om} |\phi|^2 &=& \int_{\Om} |u_1|^2 - 2 \sum_{j} |a_j|^2 +
\sum_{j,\ell} a_j \overline{a_{\ell}} \ \int_{\Om} u_j
\overline{u_{\ell}}
\\ \notag &=& 1 -\sum_{j=1}^k |a_j|^2,
\end{eqnarray}
since $\int_{\Om} u_j \overline{u_{\ell}}=\delta_{j \ell}$. One
has
\[\int_{\Om} |\Q
\phi |^2  = \dfrac{1}{2} \ \left(\int_{\Om} \phi (-\D
\overline{\phi})+ \int_{\Om} (-\D \phi) \overline{\phi}\right),\]
since $\phi$ and $\overline{\phi}$ satisfy the Dirichlet boundary
condition. We let $\phi_0=e^{i \xx \cdot \zz }$. Then,
\begin{eqnarray*}
-\D \phi &=& (-\D \phi_0) u_1- 2 \Q \phi_0 \cdot \Q u_1 + \la_1
\phi_0 u_1 - \sum a_j \la_j u_j \\ \notag &=& (-\D \phi_0) u_1- 2
\Q \phi_0 \cdot \Q u_1 + \la_1 \left(\phi+\sum a_j u_j\right) -
\sum a_j \la_j u_j \\ \notag &=& (-\D \phi_0) u_1- 2 \Q \phi_0
\cdot \Q u_1 + \la_1 \phi - \sum a_j (\la_j - \la_1) u_j.
\end{eqnarray*}
Similarly,
\begin{equation}
-\D \overline{\phi} =(-\D \overline{\phi}_0) \overline{u}_1- 2 \Q
\overline{\phi}_0 \cdot \Q \overline{u}_1 + \la_1 \overline{\phi}
- \sum \overline{a}_j (\la_j - \la_1) \overline{u}_j. \notag
\end{equation}
Therefore,
\begin{equation}
\int_{\Om} (-\D \phi) \overline{\phi} = \la_1 \ \int_{\Om}
|\phi|^2+ \int_{\Om}  \left(-\D \phi_0 u_1- 2 \Q \phi_0 \cdot \Q
u_1\right) \ \overline{\phi} -\sum a_j (\la_j - \la_1) \int_{\Om}
u_j \overline{\phi}. \notag
\end{equation}
Orthogonality makes $\int_{\Om} u_j \overline{\phi}= 0$.
\begin{equation} \label{half}
\int_{\Om} (-\D \phi) \overline{\phi} = \la_1 \ \int_{\Om}
|\phi|^2+ \int_{\Om}  \left(-\D \phi_0 u_1- 2 \Q \phi_0 \cdot \Q
u_1\right) \ \overline{\phi}.
\end{equation}
We now concentrate on the quantity $\int_{\Om}  \left(-\D \phi_0
u_1- 2 \Q \phi_0 \cdot \Q u_1\right) \ \overline{\phi}$. It is
equal to
\[
\int_{\Om}  \left(-\D \phi_0 \ \overline{\phi}_0 |u_1|^2- 2 \Q
\phi_0 \cdot \Q u_1 \ \overline{\phi}_0 \ \overline{u}_1 \right) -
\sum \overline{a}_j \ \int_{\Om} \left(-\D \phi_0 u_1- 2 \Q \phi_0
\cdot \Q u_1\right) \ \overline{u}_j.
\]
Since \[\dfrac{\partial \phi_0}{\partial x_{\ell}} = i \ z_{\ell}
\ \phi_0,\] and
\[-\D \phi_0 = |\zz|^2 \phi_0,\]
it obtains that
\begin{eqnarray} \label{second-half}
\int_{\Om}  \left(-\D \phi_0 u_1 - 2 \Q \phi_0 \cdot \Q u_1\right)
\ \overline{\phi}  &= |\zz|^2 \ \int_{\Om} |u_1|^2 + 2 \ i \
\sum_{\ell=1}^n z_{\ell} \ \int_{\Om} \overline{u}_1 \
\dfrac{\partial u_1}{\partial x_{\ell}} \notag \\
&- \sum_{j=1}^k \overline{a}_j \int_{\Om} \left(-\D \phi_0 u_1- 2
\Q \phi_0 \cdot \Q u_1\right) \ \overline{u}_j.
\end{eqnarray}
We now use the identity $-\D \phi_0 u_1- 2 \Q \phi_0 \cdot \Q u_1
= - \D (\phi_0 u_1) + \phi_0 \D u_1$ (and the Dirichlet boundary
condition) to reduce the second integral to
\[\int_{\Om} \left(-\D \phi_0 u_1- 2 \Q \phi_0 \cdot
\Q u_1\right) \ \overline{u}_j = \left(\la_j - \la_1\right) \
a_j.\] Substituting this into (\ref{second-half}) and ultimately
in (\ref{half}) we conclude (since $\int_{\Om} |u_1|^2=1$)
\begin{equation} \label{half2}
\int_{\Om} (-\D \phi) \overline{\phi} = \la_1 \ \int_{\Om}
|\phi|^2+ |\zz|^2 + 2 \ i \ \sum_{\ell=1}^n z_{\ell} \ \int_{\Om}
\overline{u}_1 \ \dfrac{\partial u_1}{\partial x_{\ell}} -
\sum_{j=1}^k |a_j|^2 \left(\la_j - \la_1\right).
\end{equation}
The term $\int_{\Om} \overline{u}_1 \ \dfrac{\partial
u_1}{\partial x_{\ell}}$ is of course real, since $u_1$ was
assumed to be real. Conjugating, we obtain
\begin{equation} \label{half3}
\int_{\Om} \phi (-\D \overline{\phi}) = \la_1 \ \int_{\Om}
|\phi|^2+ |\zz|^2 - 2 \ i \ \sum_{\ell=1}^n z_{\ell} \ \int_{\Om}
\overline{u}_1 \ \dfrac{\partial u_1}{\partial x_{\ell}} -
\sum_{j=1}^k |a_j|^2 \left(\la_j - \la_1\right).
\end{equation}
Substituting (\ref{bottom}), (\ref{half2}), and (\ref{half3}) into
the Rayleigh-Ritz ratio (\ref{RR}) yields
\begin{equation}
\la_{k+1} -\la_1 \le \dfrac{\int_{B_r} \left( |\zz|^2 -
\sum_{j=1}^k |a_j(\zz)|^2 \ (\la_j - \la_1) \right)
d\zz}{\int_{B_r} \left(1-\sum_{j=1}^k |a_j(\zz)|^2 \right) \
d\zz}, \notag
\end{equation}
or
\begin{equation} \label{first-reduction}
\la_{k+1} -\la_1  \le \dfrac{\int_{B_r} |\zz|^2 d\zz -
\sum_{j=1}^k \left(\la_j - \la_1\right) \ \int_{B_r} |a_j(\zz)|^2
d\zz } {|B_r|-\sum_{j=1}^k \int_{B_r} |a_j(\zz)|^2 \ d\zz}.
\end{equation}

We are now ready for our second reduction. We rescale the Fourier
coefficient $a_j(\zz)$ by defining
\[
\tilde{a}_j(\zz) = \dfrac{1}{(2 \pi)^{n/2}} \ \int_{\Om} \ u_1
\overline{u_j} e^{i \xx \cdot \zz } d\xx = \dfrac{a_j(\zz)}{(2
\pi)^{n/2}}.\]
By Parseval's identity
\begin{eqnarray*}
\dfrac{1}{(2 \pi)^{n}} \ \int_{B_r} |a_j(\zz)|^2  d\zz  & = & \int_{B_r} |\tilde{a}_j(\zz)|^2  d\zz \\
& \le  & \int_{\rz^n} |\tilde{a}_j(\zz)|^2  d\zz \\ \notag & = &
\int_{\Om} |u_1|^2 |u_j|^2 d\xx \\ \notag & \le & \Ess |u_1|^2
\int_{\Om}  |u_j(\xx)|^2 d\xx.
\end{eqnarray*}

We now use the following result of Chiti \cite{Chi3} (see also
\cite{Chi4}, \cite{AH1}),

\begin{equation}
\Ess |u_1| \le \left(\dfrac{\la_1}{\pi}\right)^{n/4}
\dfrac{2^{1-n/2}}{\Gamma(n/2)^{1/2} j_{n/2-1,1} J_{n/2}
(j_{n/2-1,1})}. \label{Chiti-ess-sup}
\end{equation}
Therefore,
\begin{equation} \label{second-reduction}
0<\int_{B_r} |a_j(\zz)|^2 d\zz \le \ctilde \ \la_1^{n/2},
\end{equation}
where
\[\ctilde=\frac{2^{2} \ \pi^{n/2}}{\Gamma(n/2) \ j_{n/2-1,1}^2 \
J_{n/2}^2 (j_{n/2-1,1})}.\] We note that \[\pi^{n/2}=\dfrac{n C_n
\Gamma(n/2)}{2}\] ($C_n$ is the volume of the unit $n-$ball).
Moreover, the constant $H_n$ defined in (\ref{constant}) is given
by
\[H_n= \dfrac{\ctilde}{C_n}.\]
\noindent {\em Remark.} Safarov obtained (\ref{saf}) using the
following result of E.~B.~Davies \cite{Davies}
\begin{equation}
\Ess |u_1| \le e^{1/{8 \pi}}\, \la_1^{n/4}.
\label{davies}
\end{equation}
Chiti's statement (\ref{Chiti-ess-sup}) is an isoperimetric
inequality. It saturates when $\Om$ is an $n-$ball. Note that
$e^{1/{8 \pi}} \approx 1.04059$, while the constant in
(\ref{Chiti-ess-sup}) takes the values listed in Table
\ref{table-chiti}.

\begin{table} [htbp!]
  \begin{center}
   \begin{tabular}{|c|c|}
    \hline \hline
$n$ & Chiti bound \\
    \hline
2 &  0.451909    \\
3 &  0.225079 \\
4 &  0.103129    \\
5 &  0.044409     \\
6 &  0.018199    \\
7 &  0.007157    \\
    \hline
\end{tabular}
\end{center}
\caption{Values of the constant in Chiti's bound
(\ref{Chiti-ess-sup}) as a function of the dimension $n$.}
    \label{table-chiti}
\end{table}

We now prove by induction the following lemma.
\begin{lemma} \label{main-lemma}
For $r\ge r_0(k)= H_n^{1/n} \ (1+k)^{1/n} \ \sqrt{\la_1}$, $k\ge
1$,
\begin{equation} \label{fourth-reduction}
\la_{k+1} -\la_1  \le \dfrac{\frac{n}{n+2} \ C_n \ r^{n+2} -
\ctilde \ \la_1^{n/2} \ \sum_{j=1}^k \left(\la_j - \la_1\right)}
{C_n r^n- k \ \ctilde \ \la_1^{n/2}}.
\end{equation}
\end{lemma}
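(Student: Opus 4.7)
The plan is to combine the already-established inequality (\ref{first-reduction}) with the Chiti-derived pointwise bound (\ref{second-reduction}), together with the explicit geometric identities $|B_r|=C_n r^n$ and $\int_{B_r}|\zz|^2\,d\zz=\frac{n}{n+2}C_n r^{n+2}$. The essence of the argument will be that, under the stated constraints on the scalar quantities $A_j := \int_{B_r}|a_j(\zz)|^2\,d\zz$, the right-hand side of (\ref{first-reduction}) is monotone non-decreasing in each $A_j$, so it is maximal when each $A_j$ is pushed up to its common upper bound $\ctilde\la_1^{n/2}$.

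Writing $N=\tfrac{n}{n+2}C_n r^{n+2}$, $D=C_n r^n$, and $b_j=\la_j-\la_1$, (\ref{first-reduction}) reads $(\la_{k+1}-\la_1)(D-\sum_j A_j)\le N-\sum_j b_j A_j$. I would rearrange this as
\[
(\la_{k+1}-\la_1)\,D \;\le\; N \;+\; \sum_{j=1}^{k}\bigl((\la_{k+1}-\la_1) - b_j\bigr)\,A_j,
\]
and then exploit that $b_j=\la_j-\la_1\le \la_k-\la_1\le \la_{k+1}-\la_1$ for every $j\le k$, so each coefficient $(\la_{k+1}-\la_1)-b_j$ is non-negative. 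Since $A_j\le \ctilde\la_1^{n/2}$ by (\ref{second-reduction}), replacing each $A_j$ by this common upper bound can only enlarge the right-hand side, giving
\[
(\la_{k+1}-\la_1)\bigl(D - k\,\ctilde\la_1^{n/2}\bigr) \;\le\; N \;-\; \ctilde\la_1^{n/2}\sum_{j=1}^{k} b_j,
\]
which is precisely (\ref{fourth-reduction}) after division by the positive quantity $D-k\,\ctilde\la_1^{n/2}$.

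The one delicate point is verifying the positivity of this denominator for every $r\ge r_0(k)$; this is where the choice $r_0(k) = H_n^{1/n}(k+1)^{1/n}\sqrt{\la_1}$ enters. Using $H_n=\ctilde/C_n$ one obtains $C_n r^n \ge (k+1)\,\ctilde\la_1^{n/2}$, leaving a safety margin of $\ctilde\la_1^{n/2}>0$ in the denominator. The inductive phrasing suggested by the lemma's statement amounts to iterating this argument one coordinate at a time: at each step one replaces a single $A_i$ by $\ctilde\la_1^{n/2}$ and invokes the scalar fact that $t\mapsto (N'-b_i t)/(D'-t)$ is non-decreasing on $[0,\ctilde\la_1^{n/2}]$ whenever $N'/D'\ge b_i$; this condition is preserved throughout because the current value of the quotient, call it $g$, always dominates $\la_{k+1}-\la_1$, which in turn dominates every $b_i$ with $i\le k$. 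I do not foresee any obstacle beyond this sign/monotonicity bookkeeping and the denominator-positivity check---the lemma is essentially a tight optimization of (\ref{first-reduction}) subject to the pointwise bound of Chiti.
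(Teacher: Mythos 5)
Your proof is correct, and it takes a genuinely different route from the paper's. The paper proves the lemma by induction on $k$: the base case $k=1$ follows at once from (\ref{third-reduction}) and (\ref{second-reduction}); the inductive step rewrites the inductive hypothesis as $\la_k-\la_1\le A/B$ (with $A=\frac{n}{n+2}C_nr^{n+2}-\ctilde\la_1^{n/2}\sum_{j\le k}(\la_j-\la_1)$ and $B=C_nr^n-k\ctilde\la_1^{n/2}$), observes that the weighted average $\sum_j(\la_j-\la_1)A_j/\sum_jA_j$ with weights $A_j=\ctilde\la_1^{n/2}-\int_{B_r}|a_j|^2\ge0$ is also $\le\la_k-\la_1\le A/B$, and then closes the argument by the mediant property $(A+C)/(B+D)\le\max(A/B,\,C/D)$ applied to (\ref{third-reduction}). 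Your argument instead clears the denominator in (\ref{first-reduction}), moves the $\sum_j(\la_{k+1}-\la_1)A_j$ term to the right, observes that every coefficient $(\la_{k+1}-\la_1)-(\la_j-\la_1)$ is nonnegative for $j\le k$, and so replaces each $A_j$ by its Chiti upper bound $\ctilde\la_1^{n/2}$ in a single pass, after which division by the positive $C_nr^n-k\ctilde\la_1^{n/2}$ gives the claim directly. Both proofs hinge on the same numerical fact (the $\la_j-\la_1$ are dominated by the quantity being estimated), but yours dispenses with the induction and the mediant lemma and is arguably more transparent; the paper's inductive framing has the minor virtue of producing the intermediate inequality $\la_k-\la_1\le A/B$, which is not needed for the final statement. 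Your positivity check on the denominator (using $r\ge r_0(k)$ and $H_n=\ctilde/C_n$ to get $C_nr^n\ge(k+1)\ctilde\la_1^{n/2}$) is exactly the point at which the hypothesis on $r$ enters, and it also guarantees that the denominator $C_nr^n-\sum_jA_j$ in (\ref{first-reduction}) is positive, legitimizing the clearing of denominators at the start.
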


\begin{proof}
We first note that \[\int_{B_r} |\zz|^2 \ d\zz=
\dfrac{n}{n+2} \ C_n \ r^{n+2},\] while \[|B_r|=\int_{B_r} \ d\zz=
C_n \ r^n.\] These two facts reduce (\ref{first-reduction}) to
\begin{equation} \label{third-reduction}
\la_{k+1} -\la_1  \le \dfrac{\dfrac{n}{n+2} \ C_n \ r^{n+2}-
\sum_{j=1}^k \left(\la_j - \la_1\right) \ \int_{B_r} |a_j(\zz)|^2
d\zz } {C_n \ r^n-\sum_{j=1}^k \int_{B_r} |a_j(\zz)|^2 \ d\zz}.
\end{equation}
For $k=1$
\begin{equation}
\la_{2} -\la_1  \le \dfrac{\dfrac{n}{n+2} \ C_n \ r^{n+2}}{C_n \
r^n- \int_{B_r} |a_1(\zz)|^2 \ d\zz}. \notag
\end{equation}
It then obtains by virtue of (\ref{second-reduction}) that, for $r\ge
r_0(1)=H_n^{1/n} \ 2^{1/n} \sqrt \la_1$ (this condition guarantees the
denominator is positive)
\begin{equation} \label{induction-1}
\la_{2} -\la_1  \le \dfrac{\dfrac{n}{n+2} \ C_n \ r^{n+2}} {C_n \
r^n-\ctilde \ \la_1^{n/2}},
\end{equation}
as desired. Suppose now that, for $r\ge r_0(k-1)= H_n^{1/n} \ k^{1/n} \
\sqrt{\la_1}$,
\begin{equation}
\la_{k} -\la_1  \le \dfrac{\frac{n}{n+2} \ C_n \ r^{n+2} -
\ctilde \ \la_1^{n/2} \ \sum_{j=1}^{k-1} \left(\la_j - \la_1\right)}
{C_n r^n- (k-1) \ \ctilde \ \la_1^{n/2}}. \notag
\end{equation}
Then, this is also true for $r\ge r_0(k)$ as well (since
$r_0(k) >r_0(k-1)$). This implies
\begin{equation}
\la_{k} -\la_1  \le \dfrac{\frac{n}{n+2} \ C_n \ r^{n+2} -
\ctilde \ \la_1^{n/2} \ \left(\sum_{j=1}^{k-1} \left(\la_j -
\la_1\right)+ \left(\la_k- \la_1 \right) \right)}
{C_n r^n- (k-1) \ \ctilde \ \la_1^{n/2} - \ctilde \ \la_1^{n/2}}, \notag
\end{equation}
or,
\begin{equation} \label{induction-2}
\la_{k} -\la_1  \le \dfrac{\frac{n}{n+2} \ C_n \ r^{n+2} -
\ctilde \ \la_1^{n/2} \ \sum_{j=1}^{k} \left(\la_j -
\la_1\right)}{C_n r^n- k \ \ctilde \ \la_1^{n/2}}.
\end{equation}
(We have used the equivalence
$\alpha \le \frac{A}{B} \Leftrightarrow \alpha \le \frac{A- \alpha \beta}{B- \beta}$, for $B- \beta \ge 0$.)

\noindent We also notice that if $A_j\ge0$ then
\[\dfrac{\sum_{j=1}^k A_j \left(\la_j - \la_1\right)}{\sum_{j=1}^k A_j}
\le \la_k - \la_1.\]
Hence, by virtue of (\ref{induction-2}), and for $A_j=\ctilde \
\la_1^{n/2} - \int_{\Om} |a_j(\zz)|^2 d \zz$,
\begin{equation}
\dfrac{ \sum_{j=1}^k \left(\la_j - \la_1\right) \ \left(\ctilde \
\la_1^{n/2} - \int_{\Om} |a_j(\zz)|^2 d \zz \right)}{\sum_{j=1}^k
\left(\ctilde \ \la_1^{n/2} - \int_{\Om} |a_j(\zz)|^2 d \zz
\right)} \le \dfrac{\frac{n}{n+2} \ C_n \ r^{n+2} - \ctilde \
\la_1^{n/2} \ \sum_{j=1}^{k} \left(\la_j - \la_1\right)}{C_n r^n-
k \ \ctilde \ \la_1^{n/2}}. \label{ind}
\end{equation}
For simplicity we let \[A=\frac{n}{n+2} \ C_n \ r^{n+2} - \ctilde
\ \la_1^{n/2} \ \sum_{j=1}^{k} \left(\la_j - \la_1\right),\]
\[B=C_n r^n- k \ \ctilde \ \la_1^{n/2},\]
\[C= \sum_{j=1}^{k} \left(\la_j-\la_1\right) \ \left( \ctilde \
\la_1^{n/2} - \int_{\Om} |a_j(\zz)|^2 d \zz \right), \] and
\[D=\sum_{j=1}^k \left( \ctilde \ \la_1^{n/2} -
\int_{\Om} |a_j(\zz)|^2 d \zz \right).\]
 By (\ref{third-reduction}),
\begin{equation}
\la_{k+1}- \la_1 \le \dfrac{A+C}{B+D}. \label{ind2}
\end{equation}
Combining (\ref{ind}) and (\ref{ind2}), (since $A/B = \Lambda$ and
$C/D \le \Lambda$ imply $(A+C)/(B+D)\le \Lambda$), one obtains
(\ref{fourth-reduction}) and the proof of the lemma is now
complete.
\end{proof}

If we set $r=r_0(k)=\left(\dfrac{\ctilde}{C_n}\right)^{1/n} \
(1+k)^{1/n} \ \sqrt{\la_1}$ in the statement of Lemma
\ref{main-lemma}, we obtain (\ref{new2}) (the ``averaged''
version), in the form,
\begin{equation}
\sum_{j=1}^{k+1} \left(\la_{j}-\la_1\right) \le \dfrac{n}{n+2}
\left(\dfrac{\ctilde}{C_n}\right)^{2/n} \la_1 (1+k)^{1+2/n}.
\label{discuss-1}
\end{equation}
This choice amounts to setting
\[C_n r^n- k \ \ctilde \ \la_1^{n/2}=\ctilde \ \la_1^{n/2}.\]
If we drop the sum in (\ref{fourth-reduction}) and let
$r=\tilde{r}(k)=\left(\dfrac{\ctilde}{C_n}\right)^{1/n} \ k^{1/n}
\ \sqrt{\la_1} \ \left(1+\dfrac{n}{2}\right)^{1/n}$ we are led to
(\ref{new1}), namely,
\begin{equation}
\la_{k+1}-\la_1 \le \left(1+\dfrac{n}{2}\right)^{2/n}
\left(\dfrac{\ctilde}{C_n}\right)^{2/n} \la_1 k^{2/n}.
\label{discuss-2}
\end{equation}
This choice amounts to making
\[C_n r^n- k \ \ctilde \ \la_1^{n/2}= \dfrac{k \ n}{2} \ \ctilde \
\la_1^{n/2}.\] (Note that $\tilde{r}(k)\ge r_0(k)$ since $n \ge
2$.)

\noindent {\em Remark.} The case $k=1$ in Lemma \ref{main-lemma}
provides a class of bounds for $\la_2- \la_1$ for $r\ge r_0(1)$.
The function
\[\dfrac{\dfrac{n}{n+2} \ C_n \ r^{n+2}} {C_n \ r^n-\ctilde \
\la_1^{n/2}}\] is nonincreasing for $2^{1/n} \ H_n^{1/n} \
\sqrt{\la_1} \le r \le \left(1+n/2\right)^{1/n} \ H_n^{1/n} \
\sqrt{\la_1}$ and nondecreasing beyond. At $r_0(1)$, it assumes
the value of
\[2^{1+2/n} \dfrac{n}{n+2} H_n^{2/n} \ \la_1.\]
Hence
\begin{equation} \label{not-ppw}
\dfrac{\la_2}{\la_1} \le 1+2^{1+2/n} \dfrac{n}{n+2} H_n^{2/n}.
\end{equation}
At its minimum (viz. $r=\left(1+n/2\right)^{1/n} \ H_n^{1/n}  \
\sqrt{\la_1}$), it assumes the form
\begin{equation}
\la_{2}-\la_1 \le \left(1+\dfrac{n}{2}\right)^{2/n} H_n^{2/n} \,
\la_1 \label{not-ppw2}
\end{equation}
Note that (\ref{not-ppw2}) is just (\ref{new1}) for $k=1$. Both
bounds (\ref{not-ppw}) and (\ref{not-ppw2}) are not expected to
fare better than the Ashbaugh-Benguria inequality
(\ref{AB-Ineq})--the best constant of its type (see Table
\ref{table0}). In fact, the first has the asymptotic expansion
\[1+2^{1+2/n} \dfrac{n}{n+2} H_n^{2/n} \sim 3+ \dfrac{2.53636}{n} -
\dfrac{4}{3} \left(\dfrac{1}{n} \ln{\dfrac{1}{n}}\right) -
\dfrac{4.71333}{n^{5/3}} + O\left(\dfrac{1}{n^2}\right).\]
Expanding the second, it obtains (see (\ref{asymp-1}) above)
\begin{equation}
1+ \left(1+\dfrac{n}{2}\right)^{2/n}~H_n^{2/n}  \sim  2  +
\dfrac{0.495591}{n} - \dfrac{8}{3n} \ln{\dfrac{1}{n}} -
\dfrac{2.35666}{n^{5/3}}  + O\left(\dfrac{1}{n^2}\right). \notag
\end{equation}

\begin{table}
  \begin{center}
\begin{tabular}{|c|c|c|c|r|}
    \hline \hline
$n$ & (\ref{ppw})  &  (\ref{new2}) & (\ref{new1}) & (\ref{AB-Ineq}) \\
    \hline
2 &  3     & 6.133  & 6.133 & 2.539    \\
3 &  2.333 & 4.962  & 4.832 & 2.046   \\
4 &  2     & 4.556  & 4.174 & 1.796    \\
5 &  1.8   & 4.171  & 3.777 & 1.645      \\
6 &  1.667 & 3.986  & 3.508 & 1.543  \\
7 &  1.571 & 3.856  & 3.314 & 1.470  \\
    \hline
\end{tabular}
\caption{Bound for $\dfrac{\la_{2}}{\la_1}$ as a function of the
dimension $n$.}
    \label{table0}
\end{center}
\end{table}

\section{Comparison with Existing Results} \label{comparison}

Consider the convex function
\[\nonumber
\phi_{\la}(t)=(\la-t)_{+}=\left\{
\begin{array}{c}
\displaystyle{\la-t,\;\;{\mbox{if}} \;\; t \le \la,}\\
\displaystyle{0 \ ,\;\;{\mbox{if}} \;\;\;t\ge \la.}
\end{array}
\right.
\]
In \cite{Lap}, Laptev proved (see Theo.~4.1) that
\begin{equation} \label{laptev-1}
\sum_{j} \left(\la- \la_j \right)_{+} \ge
\left(\la-\la_1\right)^{1+n/2} L^{cl}_n \frac{2}{n+2}
{\tilde{u}_{1}}^{-2}
\end{equation}
where $\tilde{u}_{1}=\Ess |u_1|$. When $\la=\la_2$,
(\ref{laptev-1}) reduces to (see Cor.~4.2 of \cite{Lap})
\begin{equation} \label{laptev-2}
\la_2-\la_1 \le \left(L^{cl}_n \frac{2}{n+2}\right)^{-2/n}
{\tilde{u}_{1}}^{4/n}.
\end{equation}
Combining this with the isoperimetric inequality of Chiti
(\ref{Chiti-ess-sup}) gives (\ref{not-ppw2}). Note that Chiti's inequality
(\ref{Chiti-ess-sup}) can be put in the form
\begin{equation}
{\tilde{u}_{1}}^{2} \le H_n L^{cl}_n {\la_1}^{n/2}.\label{chiti-myway}
\end{equation}
For $\la\ge \la_1$,
Laptev's result (\ref{laptev-1}) can also be interpreted as (see
Cor.~4.4 of \cite{Lap})
\begin{equation}
N(\la) \ge \left(\la-\la_1\right)^{1+n/2} L^{cl}_n \frac{2}{n+2}
{\tilde{u}_{1}}^{-2}
\end{equation}
which, when combined with Chiti's ineq.~(\ref{chiti-myway}), results in
the statement of (\ref{count1}) with the same universal constant
$\frac{2}{n+2} \ \frac{1}{H_n}$.

To obtain (\ref{new1}) from Laptev's vantage point, set
$\la=\la_{k+1}$ in (\ref{laptev-1}) and observe that
\[k \left(\la_{k+1} - \la_1\right) \ge \sum_{j=1}^{k} \left(\la- \la_j
\right).\] Hence
\begin{equation} \label{laptev-3}
\la_{k+1}-\la_1 \le k^{2/n} \left(L^{cl}_n
\frac{2}{n+2}\right)^{-2/n} {\tilde{u}_{1}}^{4/n}.
\end{equation}
Again, bounding $\tilde{u}_{1}$ using Chiti's isoperimetric
inequality (\ref{chiti-myway}) yields the statement (\ref{new1}). In fact,
one can write (\ref{laptev-1}) in the form
\begin{equation}
k \left(\la_{k+1} - \overline{\la}\right) \ge
\left(\la_{k+1}-\la_1\right)^{1+n/2} L^{cl}_n \frac{2}{n+2}
{\tilde{u}_{1}}^{-2} \notag
\end{equation}
where $\overline{\la}=\sum_{j=1}^k \la_j/k$. Therefore, using
(\ref{Chiti-ess-sup}),
\begin{equation} \label{laptev-4}
\left(1+\dfrac{n}{2}\right) H_n \, \la_1^{n/2}  k \left(\la_{k+1}
- \overline{\la}\right) \ge \left(\la_{k+1}-\la_1\right)^{1+n/2}.
\end{equation}
This is a class of less accessible Weyl-type universal upper
bounds for $\la_{k+1}$ different from both (\ref{new1}) and
(\ref{new2}), but in the same spirit. Indeed, Cor.~4.4 from
\cite{Lap} (and eventually the weaker inequality (\ref{new1}))
follows from Theo.~4.1 of \cite{Lap} by applying the rather rough
estimate $\left(\la -\la_j\right)_{+} \le \left(\la
-\la_1\right)_{+}$. Refining this coarse estimate, one can recover
(\ref{new2}) from Laptev's bound (\ref{laptev-1}). Starting with
(\ref{laptev-1}), one first introduces the Legendre transform
$\mathcal{L}\{f\}(p) = \sup_{\la\ge 0} \left(p \ \la -
f(\la)\right)$. It is then clear (see, e.g., \cite{LapWeidl}) that
\begin{equation}
\mathcal{L}\left\{\displaystyle{\sum_{j}}
\left(\la-\la_j\right)_{+} \right\}(p) = \left(p- [p]\right) \
\la_{[p]+1} + \sum_{j=1}^{[p]} \la_j,\label{legendre1}
\end{equation}
where $[p]$ designates the integer part of $p$. The Legendre
transform of the right hand side of (\ref{laptev-1}) is given by
\begin{equation}
\mathcal{L}\left\{\left(\la-\la_1\right)^{1+n/2} L_n^{cl}
\frac{2}{n+2} \, {\tilde{u}_{1}}^{-2} \right\}(p)= \la_1 \ p +
\frac{n}{n+2} \ p^{1+2/n} \ {\tilde{u}_{1}}^{-4/n} \left(L_n^{cl}
\right)^{-2/n}.
\end{equation}
Since $f(\la) \ge g(\la)$ for all $\la\ge 0$ implies
$\mathcal{L}\left\{f\right\}(p) \le
\mathcal{L}\left\{f\right\}(p)$ for all $p\ge 0$, we have (setting
$p=k$)
\begin{equation}
\sum_{j=1}^k \left(\la_j - \la_1\right) \le \dfrac{n}{n+2} \
{\tilde{u}_{1}}^{4/n} \left(L_n^{cl} \right)^{-2/n} k^{1+2/n}.
\end{equation}
Combining the latter inequality with Chiti's inequality
(\ref{Chiti-ess-sup}) we obtain (\ref{new2}).

\medskip

\begin{figure}[htb!]
  \begin{center}
    \includegraphics[width=.7 \textwidth,totalheight=.35\textwidth, bb= 91 2 322 115]{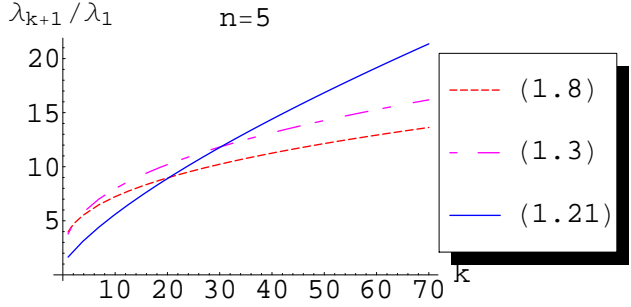}
    \caption{Comparison of New and Old Bounds.}
    \label{fig2}
  \end{center}
\end{figure}

\medskip

Now, we turn to comparing these bounds. We claim that (\ref{new4})
is sharper than (\ref{new1}). To see this, we take the limit of
the ratio of bounds as $k\to \infty$. This limit is equal to
\[\dfrac{1+\frac{4}{n}}{\left(1+\frac{n}{2}\right)^{1+2/n}}.\]
It is strictly less than $1$ since $1+4/n< 1+n/2
<(1+n/2)^{1+2/n}$, for $n\ge 3$. This limit is equal to $3/4$ at
$n=2$.

That both (\ref{new1}) and (\ref{new4}) are sharper than
(\ref{ab94-comp1}) (in the form (\ref{ab94-comp2})) follows from
Krahn's second inequality \cite{Krahn} (see in particular
ineq.~(22) of \cite{A3})
\[|\Om|^{2/n} \la_2 > 2^{2/n} C_n^{2/n} j_{n/2-1,1}^2.\]
For the unit ball in $\rz^n$, $|\Om|=C_n$ and $\la_2=j_{n/2,1}^2$.
Therefore,
\[\dfrac{j_{n/2,1}^2}{j_{n/2-1,1}^2}>2^{2/n},\]
or
\begin{equation} \label{ab-comp-powers}
\dfrac{\log{ \left(j_{n/2,1}^2 /j_{n/2-1,1}^2\right)}}{\log
2}>\dfrac{2}{n}
\end{equation} (see also (\ref{ab-power}) above). This is clearly
displayed in Fig.~2 where the Ashbaugh-Benguria bound fares better
to about $k=20$. The ``averaged'' bound (\ref{new4}) takes over
and--at a latter stage--so does (\ref{new1}). Three tables are
included in this paper which display this fact as well (see Tables
\ref{table0}-\ref{table2}). The new inequalities--both of which
disguised in earlier work of Laptev--cannot be expected to improve
on existing bounds in the case of $\la_2/\la_1$. There is a
competition (see Table \ref{table1}) in the case of
$\la_{32}/\la_1$ between (\ref{new1}) and (\ref{ab94}) (already
(\ref{new4}) is better than both for $n \ge 3$). In the case of
$\la_{128}/\la_1$, both new bounds show considerable improvement
(see Table \ref{table2}).

\begin{table}
  \begin{center}
\begin{tabular}{|c|c|c|c|r|}
    \hline \hline
$n$ &   (\ref{ppw})   &  (\ref{new4}) & (\ref{new1}) & (\ref{ab94}) \\
    \hline
2 &  $6.177 \times 10^{14}$   & 122.334   & 160.112  & 105.46   \\
3 &  $2.554 \times 10^{11}$   & 31.071   & 38.811  & 35.831  \\
4 &  $2.147 \times 10^{9}$   &  15.606  & 18.675  & 18.707   \\
5 &  $8.193 \times 10^7$   &  10.341  & 11.965  & 12.052   \\
6 &  $7.539 \times 10^6$   &  7.870  & 8.878  & 8.758   \\
7 &  $1.217 \times 10^6$   &  6.491  & 7.174  & 6.865  \\
    \hline
\end{tabular}
\caption{ Bound for $\dfrac{\la_{32}}{\la_1}$ as a function of the
dimension $n$.}
    \label{table1}
\end{center}
\end{table}

\begin{table}
  \begin{center}
   \begin{tabular}{|c|c|c|c|r|}
    \hline \hline
$n$ &   (\ref{ppw})  &  (\ref{new4}) & (\ref{new1}) & (\ref{ab94})  \\
    \hline
2 &  $3.930 \times 10^{60}$   & 491.885   & 652.846  & 679.705 \\
3 &  $5.408 \times 10^{46}$   & 75.911  & 97.808  &  149.957 \\
4 &  $1.701 \times 10^{38}$   &  29.539  & 36.774    & 60.369  \\
5 &  $2.628 \times 10^{32}$   &  16.814  & 20.2736    &  32.621  \\
6 &  $1.496 \times 10^{28}$   &  11.593 & 11.5934    & 20.861  \\
7 &  $8.500 \times 10^{24}$   &  8.917 & 8.917    & 14.836  \\
    \hline
\end{tabular}
\caption{ Bound for $\dfrac{\la_{128}}{\la_1}$ as a function of
the dimension $n$.}
    \label{table2}
\end{center}
\end{table}

\begin{center}
\large{Acknowledgment}
\end{center}
The author offers his sincerest gratitude to Professors Lennie Friedlander
and Mark Ashbaugh for comments and advice on drafts of this paper. The author also acknowledges the anonymous referee for insight on the route from (\ref{laptev-1}) to (\ref{new2}).

\end{document}